\newcommand{\QQ}{\mathbf Q}
\newcommand{\ZZ}{\mathbf Z}
\newcommand{\PP}{\mathbf P}
\newcommand{\NN}{\mathbf N}
\newcommand{\MM}{\mathbf M}
\newcommand{\AAA}{\mathbf A}
\newcommand{\BB}{\mathbf B}
\newcommand{\CC}{\mathbf C}
\newcommand{\XX}{\mathbf X}
\newcommand{\YY}{\mathbf Y}
\newcommand{\vv}{\mathbf v}
\newcommand{\ww}{\mathbf w}
\newcommand{\ee}{\mathbf e}
\newcommand{\pp}{\mathbf p}
\newcommand{\nn}{\mathbf n}
\newcommand{\II}{\mathbf I}
\newcommand{\qq}{\mathbf q}
\newcommand{\rr}{\mathbf r}
\newcommand{\ttt}{\mathbf t}
\newcommand{\zzero}{\mathbf 0}
\newtheorem{theorem}{Theorem}[section]
\newtheorem{lemma}[theorem]{Lemma} 
\newtheorem{corollary}[theorem]{Corollary}
\newtheorem{proposition}[theorem]{Proposition}
\theoremstyle{definition}
\newtheorem{example}[theorem]{Example}
\theoremstyle{remark}
\numberwithin{equation}{section}
\title{Zariski decomposition: a new (old) chapter of linear algebra}
\author{Thomas Bauer, Mirel Caib\u{a}r, and Gary Kennedy}
\date{}
\begin{document}
\maketitle


\begin{abstract}
In a 1962 paper, Zariski introduced the decomposition theory that now bears his name. Although it arose in the context of algebraic geometry and deals with the configuration of curves on an algebraic surface, we have recently observed that the essential concept is purely within the realm of linear algebra. In this paper, we formulate Zariski decomposition as a theorem in linear algebra and present a linear algebraic proof. We also sketch the geometric context in which Zariski first introduced his decomposition.
\end{abstract}

\section{Introduction.}
Oscar Zariski (1899--1986) was a central figure in 20th century mathematics. His life, ably recounted in \cite{parikh}, took him from a small city in White Russia, through his advanced training under the masters of the ``Italian school'' of algebraic geometry, and to a distinguished career in the United States, the precursor of a tide of emigrant talent fleeing political upheaval in Europe. As a professor at Johns Hopkins and Harvard University, he supervised the Ph.D.s of some of the most outstanding mathematicians of the era, including two Fields Medalists, and his mathematical tribe (traced through advisors in \cite{gen}) now numbers more than 800. 
Zariski thoroughly absorbed and built upon the synthetic arguments of the Italian school, and in \cite{algsurf} he gave a definitive account of the classical theory of algebraic surfaces. In the course of writing this volume, however, despite his admiration for their deep geometric insight he became increasingly disgruntled with the lack of rigor in certain arguments. He was thus led to search for more adequate foundations for algebraic geometry, taking (along with Andre Weil) many of the first steps in an eventual revolutionary recasting of these foundations by Alexander Grothendieck and others.
\par
In a 1962 paper \cite{zar}, Zariski introduced the decomposition theory that now bears his name. Although it arose in the context of algebraic geometry and deals with the configuration of curves on an algebraic surface, we have recently observed that the essential concept is purely within the realm of linear algebra. (A similar observation has been made independently by Moriwaki in Section 1 of \cite{moriwaki}.)
In this paper, we formulate Zariski decomposition as a theorem in linear algebra and present a linear algebraic proof.  To motivate the construction, however, we begin in Section \ref{origcontext} with a breezy account of the the original geometric situation, and eventually return to this situation in Section \ref{windup} to round off the discussion and present one substantive example. We give only a sketchy description which lacks even proper definitions; one needs a serious course in algebraic geometry to treat these matters in a rigorous way. But, as already indicated, the thrust of the paper is in a far different direction, namely toward disentangling the relatively elementary linear algebra from these more advanced ideas. Beginning in Section \ref{decomp}, our treatment is both elementary and explicit; a basic course in linear algebra, which includes the idea of a negative definite matrix, should be a sufficient background.
After laying out the definitions and the main idea, we present a simple new construction (which first appeared in \cite{simpleproof}) and show that it satisfies the requirements for a Zariski decomposition. We look at a few elaborations, and we present Zariski's original algorithm (shorn of its original geometric context).


\section{The original context.} \label{origcontext}

The study of algebraic curves, with its ties to the theory of Riemann surfaces and many other central ideas of mathematics, has ancient roots, but our understanding of algebraic surfaces has developed more recently. One of Zariski's main concerns was how to extend well-known fundamental theories from curves to surfaces. In trying to understand such a surface, one is naturally led to study the algebraic curves which live on it, asking what sorts of curves there are, how they meet each other, and how their configurations influence the geometry of the surface.  For example, in the plane\footnote{We mean the complex projective plane. Our equation is given in affine coordinates, but we intend for the curve to include appropriate points at infinity. The reader who hasn't encountered these notions will need to take our assertions in this section on faith.}
(the simplest example of an algebraic surface) an algebraic curve is the solution set of a polynomial equation $f(x,y)=0$. One can calculate that the vector space of all polynomials in two variables of degree not exceeding $d$ is a vector space of dimension $\binom{d+2}{2}$. Since two such polynomials define the same curve if and only if one is a multiple of the other, we say that the set of all such curves forms a \emph{linear system} of dimension $\binom{d+2}{2}-1$. (In general, the dimension of a linear system is one less than the dimension of the corresponding vector space of functions.) More generally, for each curve $D$ on an algebraic surface one can naturally define an associated linear system of curves 
which are equivalent in a certain sense to $D$, denoting it by $|D|$. This linear system depends not just on the curve as a set of points but also on the equation which defines it: the equation $f(x,y)^n=0$ defines a larger linear system than does $f(x,y)=0$, and we denote this larger linear system by $|nD|$. (For a curve of degree $d$ in the plane, $|nD|$ consists of all curves of degree $nd$.)
\par
His student David Mumford (in an appendix to \cite{parikh}) says that 
``Zariski's papers on the general topic of linear systems form a rather coherent whole in which one can observe at least two major themes which he developed repeatedly. One is the Riemann-Roch problem: to compute the dimension of a general linear system \dots and especially to consider the behavior of $\dim |nD|$ as $n$ grows. The other is to apply the theory of linear systems in the 2-dimensional case to obtain results on the birational geometry of surfaces and on the classification of surfaces. In relation to his previous work, this research was, I believe, something like a dessert. He had worked long setting up many new algebraic techniques and laying rigorous foundations for doing geometry --- and linear systems, which are the heart of Italian geometry, could now be attacked."
\par
Zariski's paper \cite{zar} is concerned with the following question: for a specified curve $D$ on an algebraic surface, what is the order of growth of $\dim |nD|$ as a function of $n$? His answer involved a decomposition: he showed that $D$, considered as an element of a certain vector space, could be written as a sum $P+N$ of a ``positive part'' and a ``negative part," so that the answer to his question was determined by $P$ alone. Specifically, he showed that the order of growth was the ``self-intersection number'' of $P$. In the heart of this paper, we will give an account of Zariski's decomposition, assuming that we already have been given the relevant ``intersection theory'' on the surface. In the last section of the paper we will resume this account of the original context. In particular we will say something about how this intersection theory arises, and give a precise statement of Zariski's formula on the order of growth.
\begin{figure} 
\begin{center}
\includegraphics[width=60mm]{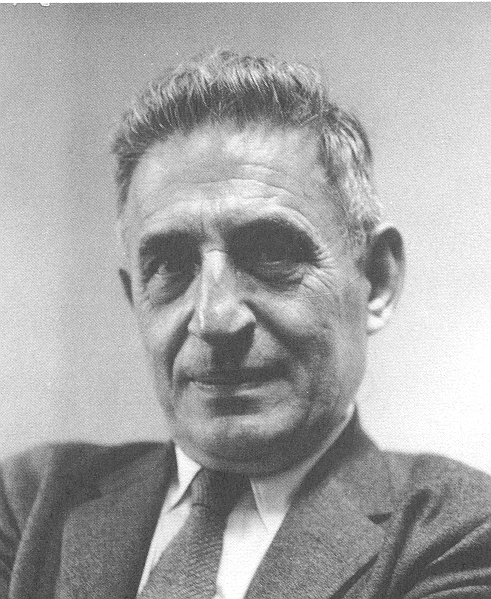}
\caption{Oscar Zariski in 1960 (frontispiece photo of \cite{parikh}, credited to Yole Zariski)}.
\end{center}
\end{figure}


\section{The decomposition.} \label{decomp}
We now forget about the original context, and lay out an elementary theory within linear algebra. In Section \ref{windup} we will resume our account of the geometry which motivates the following definitions.
\par
Suppose that $V$ is a vector space over $\QQ$ (the rational numbers) equipped with a symmetric bilinear form; we denote the product of $\vv$ and $\ww$ by $\vv \cdot \ww$. Suppose furthermore that there is a basis $E$ with respect to which the 
bilinear form is an \emph{intersection product}, meaning that the product of any two distinct basis elements is nonnegative.
Most of our examples will be finite-dimensional, but we are also interested in the infinite-dimensional case.
If $V$ is finite-dimensional then we will assume that $E$ has an ordering, and using this ordered basis we will identify $V$ with $\QQ^n$ (where $n$ is its dimension); a vector $\vv$ will be identified with its coordinate vector, written as a column. We can then specify the bilinear form by writing its associated symmetric matrix $\MM$ with respect to the basis, calling it the \emph{intersection matrix}. Thus the product of $\vv$ and $\ww$ is $\vv^T \MM \ww$, where $T$ denotes the transpose. With this interpretation, the form is an intersection product if and only if all off-diagonal entries of $\MM$ are nonnegative. 
\par
In any case, whether $V$ is finite- or infinite-dimensional, each element $\vv \in V$ can be written in a unique way as a linear combination of a finite subset of the basis, with all coefficients nonzero.
We will call this finite subset the \emph{support} of $\vv$, and the finite-dimensional subspace of $V$ which it spans is called the \emph{support space} of $\vv$. If all coefficients are positive, then 
$\vv $ is said to be \emph{effective}.\footnote{In the motivating application, the basis vectors
will be certain curves on the algebraic surface, and hence an arbitrary vector
$\vv \in V$ will be a linear combination of such curves. The combinations
that use nonnegative coefficients may be interpreted geometrically,
while the others are just ``virtual curves.''}
In particular each basis element is effective, and the zero vector is also considered to be effective (since we may sum over the empty set).
\par
A vector $\ww$ is called \emph{nef with respect to} $V$ if $\ww \cdot \vv \geq 0$ for every effective vector $\vv$. Note that to check whether a vector satisfies this condition it suffices to check whether its product with each basis element is nonnegative. In the finite-dimensional case (using the identification of $V$ with $\QQ^n$, as described above) the definition can be formulated in terms of the intersection matrix: since the entries of $\MM\ww$ are the products of the basis elements with $\ww$, we observe that a vector $\ww$ is nef with respect to $V$ precisely when $\MM\ww$ is effective.
In particular if $\MM$ is nonsingular, then $\ww$ is nef with respect to $V$ if and only if there is an effective vector $\vv \in V$ for which $\MM^{-1}\vv=\ww$.
\par
Now suppose that $W$ is a subspace of $V$ 
spanned by some subset of the basis and containing the support space of a vector $\ww$ (for example, $W$ could be the support space itself). If $\ww$ is nef with respect to $V$ then it is nef with respect to $W$, but the opposite implication may not be correct.
\begin{example} \label{nefnotnef}
Suppose that the intersection matrix is
$$
\MM =
\left[
\begin{array}{rr}
-2 & 1 \\
1 & 1 
\end{array}
\right],
$$
and let $W$ be the one-dimensional subspace spanned by the first basis element $\ee_1$. Then $-\ee_1$ is nef with respect to $W$, but it is not nef with respect to $V$.
\end{example}
\par
We do, however, have  a partial converse.
\begin{lemma} 
If $\ww \in W$ is effective and nef with respect to the subspace $W$, 
then it is nef with respect to the entire space $V$.
\end{lemma}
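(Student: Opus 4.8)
The plan is to check the nef condition one basis vector at a time. Since $\ww$ is nef with respect to $V$ exactly when $\ww \cdot \ee \geq 0$ for every basis element $\ee \in E$, I would fix such an $\ee$ and split into two cases according to whether or not $\ee$ lies in the subspace $W$.

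If $\ee \in W$, then (because $W$ is spanned by a subset of the basis, and basis expansions are unique) $\ee$ must be one of the basis vectors spanning $W$; in particular it is an effective element of $W$. Since $\ww$ is assumed nef with respect to $W$, this immediately gives $\ww \cdot \ee \geq 0$. If instead $\ee \notin W$, I would expand $\ww = \sum_i c_i \ee_i$ over its support. Effectiveness of $\ww$ means every coefficient $c_i$ is positive, while the support space of $\ww$ is contained in $W$, so each support vector $\ee_i$ lies in $W$ and is therefore distinct from $\ee$; hence $\ee_i \cdot \ee \geq 0$ by the intersection-product hypothesis on the basis. Consequently $\ww \cdot \ee = \sum_i c_i(\ee_i \cdot \ee)$ is a sum of nonnegative terms, so $\ww \cdot \ee \geq 0$ in this case as well.

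I do not expect a genuine obstacle: the two hypotheses do complementary jobs — effectiveness of $\ww$ pins down the signs of the coefficients $c_i$, and the intersection-product condition pins down the signs of the cross-products $\ee_i \cdot \ee$ with the basis vectors lying outside the support. The one point worth flagging is why effectiveness of $\ww$ (rather than merely nefness with respect to $W$) is indispensable: it is used only in the second case, and its failure is exactly what prevents Example \ref{nefnotnef} from being a counterexample, since there $-\ee_1$ is not effective.
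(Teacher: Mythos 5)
Your proof is correct and follows essentially the same route as the paper's: verify nefness against each basis element, using nefness with respect to $W$ for the basis vectors spanning $W$ and effectiveness of $\ww$ together with the intersection-product condition for the remaining ones. Your write-up just makes the case split and the role of each hypothesis more explicit.
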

\begin{proof}
By hypothesis, the product of $\ww$ and a basis element for $W$ is nonnegative. Since $\ww$ is effective, its intersection product with any other basis element of $V$ is likewise nonnegative.
\end{proof}
In view of this lemma, we may simply call such a vector  \emph{effective and nef}.\footnote{Some say that the neologism ``nef'' is short for ``numerically effective,'' but this gives a misleading impression of its meaning (since an effective vector is not necessarily nef). Others insist that it should be thought of as an acronym for ``numerically eventually free.''}
\par
Here is our main theorem.
\begin{theorem} \label{zdecomp}
For each effective element $\vv \in V$, there is a unique way to write it as a sum 
$$
\vv=\pp+\nn
$$
of elements satisfying the following conditions:
\begin{enumerate}
\item $\pp$ is nef with respect to $V$;
\item $\nn$ is effective;
\item $\pp \cdot \ee=0$ for each basis element $\ee$ in the support of $\nn$;
\item the restriction of the intersection product to the support space of $\nn$ is negative definite.
\end{enumerate}
Furthermore $\pp$ is effective.
\end{theorem}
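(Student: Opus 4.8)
The plan is to reduce the problem to a finite-dimensional one, produce $\pp$ and $\nn$ by an explicit construction (the one from \cite{simpleproof}), and then settle uniqueness by a minimality argument. The single preliminary I would isolate first is an elementary fact about negative-definite intersection matrices: if $W$ is spanned by a subset of $E$ and the restricted form is negative definite, then $-\MM_W$ is a positive-definite matrix with nonpositive off-diagonal entries, so $\MM_W^{-1}$ has all entries $\le 0$; equivalently, any $\ww\in W$ with $\ww\cdot\ee\le 0$ for every basis element $\ee$ of $W$ is effective. A short induction (splitting off one basis element and taking a Schur complement) proves this. With this in hand, the Lemma lets me reduce existence to the finite-dimensional case: it is enough to build an effective $\pp$ and effective $\nn$ inside the support space $V'$ of $\vv$ so that $\pp+\nn=\vv$, the support of $\nn$ is negative definite, $\pp\cdot\ee=0$ on it, and $\pp$ is nef with respect to $V'$ — for then, $\pp$ being effective, it is automatically nef with respect to $V$.

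For existence: for each subset $S$ of the support of $\vv$ on which the form is negative definite, let $\nn_S$ be the unique vector in $\operatorname{span}(S)$ with $(\vv-\nn_S)\cdot\ee=0$ for all $\ee\in S$ (it exists because $\MM_S$ is invertible). The collection of such $S$ for which $\nn_S$ is effective is nonempty (it contains $\emptyset$) and finite, so I would pick a \emph{maximal} member $S$ and set $\nn:=\nn_S$, $\pp:=\vv-\nn$. Conditions (2)--(4) hold by construction. To see that $\pp$ is effective, write $\vv$ as the sum of its restriction to $S$ and its restriction to the remaining basis elements and feed this into the linear system defining $\nn$; because $\MM_S^{-1}$ has nonpositive entries and the products of the second summand with the elements of $S$ are nonnegative, the coefficients of $\nn$ on $S$ come out to be at most those of $\vv$, so $\pp$ has nonnegative coefficients on $S$ and inherits the positive coefficients of $\vv$ elsewhere.

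The crux — and the step I expect to be the main obstacle — is showing $\pp$ is nef with respect to $V'$. Suppose not: $\pp\cdot\ee_0<0$ for some $\ee_0$ in the support of $\vv$, necessarily with $\ee_0\notin S$. I would form $\qq:=\ee_0-\rr$, where $\rr\in\operatorname{span}(S)$ is the unique choice making $\qq$ orthogonal to $\operatorname{span}(S)$; that $\qq$ is effective uses once more that $\MM_S^{-1}$ has nonpositive entries together with the nonnegativity of the products $\ee_0\cdot\ee$ for $\ee\in S$. Since $\nn\in\operatorname{span}(S)$ and $\pp\cdot\ee=0$ for $\ee\in S$, we get $\vv\cdot\qq=\pp\cdot\qq=\pp\cdot\ee_0<0$. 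But $\vv$ and $\qq$ are both effective, so a negative product forces some basis element $\ee_\ell$ in the support of $\vv$ with $\ee_\ell\cdot\qq<0$; such an $\ee_\ell$ must lie in the support of $\qq$ (otherwise every term of $\ee_\ell\cdot\qq$ would be nonnegative), which is contained in $S\cup\{\ee_0\}$, and it cannot lie in $S$ (there $\ee_\ell\cdot\qq=0$), so $\ee_\ell=\ee_0$ and $\qq\cdot\qq=\ee_0\cdot\qq<0$. But $\qq\cdot\qq$ is exactly the Schur complement of $\MM_S$ in $\MM_{S\cup\{\ee_0\}}$, so the latter is still negative definite; and one checks that $\nn_{S\cup\{\ee_0\}}$ is again effective — its $\ee_0$-coefficient equals $(\pp\cdot\ee_0)/(\qq\cdot\qq)>0$ and its remaining coefficients are at least those of $\nn$. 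Thus $S\cup\{\ee_0\}$ is a strictly larger member of our collection, contradicting maximality. Hence $\pp$ is nef.

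For uniqueness I would prove the sharper claim that the negative part is minimal: whenever $\vv=\pp'+\nn'$ with $\pp'$ merely nef (with respect to $V$) and $\nn'$ effective, then $\nn\le\nn'$ coefficientwise, where $(\pp,\nn)$ is any decomposition satisfying (1), (3), (4). Write $\ttt:=\nn'-\nn$ and split it into the part $\ttt_1$ on the support of $\nn$ and the part $\ttt_2$ off it; then $\ttt_2$ is effective automatically. For $\ee$ in the support of $\nn$ one has $\nn\cdot\ee=\vv\cdot\ee$ by (3) and $\nn'\cdot\ee\le\vv\cdot\ee$ because $\pp'$ is nef, hence $\ttt\cdot\ee\le 0$; since $\ttt_2\cdot\ee\ge 0$, also $\ttt_1\cdot\ee\le 0$ for every such $\ee$, and as $\ttt_1$ lies in the negative-definite support space of $\nn$, the preliminary fact makes $\ttt_1$ effective, so $\ttt$ is effective. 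Applying this with the roles of two decompositions satisfying (1)--(4) interchanged gives $\nn_1\le\nn_2$ and $\nn_2\le\nn_1$, so $\nn_1=\nn_2$, the decomposition is unique, and it coincides with the one constructed above, whose $\pp$ is effective.
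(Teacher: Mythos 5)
Your proof is correct, but it is a genuinely different route from the one in the paper. The paper constructs $\pp$ by maximizing the linear functional $\sum x_i$ over the compact polytope of effective nef candidates bounded above by $\vv$ (temporarily allowing real coefficients), gets conditions (1)--(3) and effectivity of $\pp$ almost for free, uses Corollary \ref{restate} (a non-negative-definite special subspace contains a nonzero effective nef vector) to force condition (4), proves uniqueness via Lemma \ref{maxnef} (the coefficientwise maximum of two nef vectors is nef), and needs a separate nonsingular-linear-system argument to recover rationality. You instead work inside the support space of $\vv$ and take a maximal negative definite subset $S$ of the support for which the canonically defined $\nn_S$ (solving $(\vv-\nn_S)\cdot\ee=0$ for $\ee\in S$) is effective; conditions (2)--(4) and effectivity of $\pp$ then follow from the sign fact that $\MM_S^{-1}\le 0$ (the paper's Lemma \ref{negdefinverse}, which your Schur-complement induction proves correctly), and the real work is your enlargement step: if $\pp\cdot\ee_0<0$, the orthogonal projection trick shows $\qq\cdot\qq<0$ is the Schur complement, so $S\cup\{\ee_0\}$ is still negative definite with effective $\nn_{S\cup\{\ee_0\}}$, contradicting maximality --- all the computations there check out. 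Your uniqueness argument is also different and slightly stronger: you prove the negative part is coefficientwise minimal among all splittings of $\vv$ into a nef plus an effective vector, using the Lemma \ref{firstlem}-type consequence of the sign fact rather than the max-of-nef lemma. What each approach buys: yours never leaves $\QQ$, so rationality is automatic, it yields the minimality of $\nn$ as a bonus, and its enlargement step is close in spirit to Zariski's original algorithm (the growth of the subspaces $N_k$) and to the chamber description in \cite{chambers2}; the paper's polytope construction is shorter on the existence side, concentrating all difficulty into the single appeal to Corollary \ref{restate}. One small mislabel: the construction you actually carry out is not the one from \cite{simpleproof} (that is the polytope/maximization argument the paper uses); this does not affect correctness.
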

This is called the \emph{Zariski decomposition} of $\vv$; the elements $\pp$ and $\nn$ are called its \emph{positive} and \emph{negative} parts. We note that both extremes are possible: for example, if $\vv$ itself is nef with respect to $V$,
 then $\pp=\vv$ and the support space of $\nn$ is trivial.
\par
\begin{example} \label{myexample}
Again suppose that
$$
\MM =
\left[
\begin{array}{rr}
-2 & 1 \\
1 & 1 \\
\end{array}
\right],
$$
and let $\vv=2\ee_1+\ee_2$. Since $\vv \cdot \vv=-3$, the vector $\vv$ is not nef. But since $\ee_2\cdot\ee_2$ is positive, $\ee_2$ cannot be in the support of $\nn$. Thus $\nn=x\ee_1$ and $\pp=(2-x)\ee_1+\ee_2$ for some number $x$. By the third condition $\pp\cdot\ee_1=-2(2-x)+1=0$. Thus
$$
\pp=\frac{1}{2}\ee_1+\ee_2
\quad
\text{and}
\quad
\nn=\frac{3}{2}\ee_1.
$$
It's instructive to look at all elements $x\ee_1+y\ee_2$, where $x \leq 2$ and $y \leq 1$. (Since the coordinates of $\nn$ must be nonnegative, these are the only possibilities for $\pp$.) If the corresponding points $(x,y)$ are plotted in the plane, then the nef elements form a triangle, and the element $\pp$ corresponds to the upper right vertex. See Figure~\ref{examplecone}.

\begin{figure}
\begin{center}
\begin{tikzpicture} 
\filldraw [lightgray] (-1.5,-.5) rectangle (2,1);
\draw [->] (0,0) -- (2.5,0) node[right]{$x$}; 
\draw [->] (0,0) -- (0,1.7) node[above]{$y$}; 
\draw (2,1) node[above right]{$\vv$} -- (2,-0.5);
\draw (2,1) -- (-1.5,1);
\filldraw (2,1) circle (2pt);
\filldraw [lightgray] (5,0) -- (5.75,1.5) -- (3.5,1.5) -- cycle;
\draw [->] (5,0) -- (7.5,0) node[right]{$x$}; 
\draw [->] (5,0) -- (5,1.7) node[above]{$y$};
\draw (4.5,-1) -- (5.75,1.5);
\draw (3.5,1.5) -- (6,-1);
\filldraw [lightgray] (10,0) -- (10.5,1) -- (9,1) -- cycle;
\draw [->] (10,0) -- (12.5,0) node[right]{$x$}; 
\draw [->] (10,0) -- (10,1.7) node[above]{$y$};
\draw (12,1) node[above right]{$\vv$} -- (12,-0.5);
\draw (12,1) -- (8.5,1);
\draw (9.5,-1) -- (10.75,1.5);
\draw (8.5,1.5) -- (11,-1);
\filldraw (12,1) circle (2pt);
\filldraw (10.5,1) circle (2pt) node[below right]{$\pp$};
\filldraw (11.5,0) circle (2pt) node[above left]{$\nn$};
\end{tikzpicture}
\caption{An example of Zariski decomposition. The picture on the left shows the candidates for the positive part of $\vv$. The middle picture shows the nef vectors. The shaded triangle in the right picture is their overlap.}
\label{examplecone}
\end{center}
\end{figure}
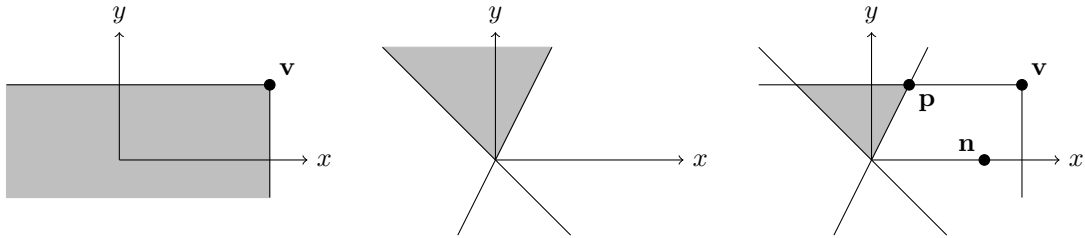
\end{example}

\section{Proof of the main theorem.}

Recall that if $V$ is finite-dimensional then we will identify it with $\QQ^n$. In particular the basis element $\ee_j$ is identified with the column vector having 1 in position $j$ and 0 elsewhere.
We begin the proof with a pair of lemmas.
\begin{lemma} \label{negdefinverse}
If $\MM$ is a negative definite matrix whose off-diagonal entries are nonnegative, then all entries of $\MM^{-1}$ are nonpositive.
\end{lemma}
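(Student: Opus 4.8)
The plan is to reduce the statement to a fact about the columns of $\MM^{-1}$. Since $\MM$ is negative definite it is invertible, and the $j$th column of $\MM^{-1}$ is the unique vector $\yy$ solving $\MM\yy=\ee_j$; so it is enough to show that for each $j$ this $\yy$ has all coordinates nonpositive.

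Fix $j$ and split $\yy$ into its positive and negative parts: write $\yy=\pp-\mm$, where $\pp$ is the vector obtained from $\yy$ by keeping the positive coordinates and zeroing out the rest, and $\mm$ is obtained by taking the absolute values of the negative coordinates and zeroing out the rest. Then $\pp$ and $\mm$ are both entrywise nonnegative and have disjoint supports, and the goal becomes $\pp=\zzero$. The crux is to evaluate $\pp^T\MM\yy$ in two ways. On one hand, $\MM\yy=\ee_j$ and $\pp\geq\zzero$ give $\pp^T\MM\yy=\pp^T\ee_j\geq 0$. On the other hand $\pp^T\MM\yy=\pp^T\MM\pp-\pp^T\MM\mm$; here $\pp^T\MM\pp\leq 0$ since $\MM$ is negative definite, while $\pp^T\MM\mm\geq 0$ because in the sum $\sum_{i,k}p_iM_{ik}m_k$ every nonzero term has $i\neq k$ (as $\pp$ and $\mm$ have disjoint supports), and then $M_{ik}\geq 0$ by hypothesis. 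Hence $\pp^T\MM\yy\leq 0$.

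Comparing the two evaluations gives $\pp^T\MM\yy=0$, so $\pp^T\MM\pp=\pp^T\MM\mm$; the left side is $\leq 0$ and the right side is $\geq 0$, so both vanish. In particular $\pp^T\MM\pp=0$, and negative definiteness of $\MM$ forces $\pp=\zzero$. Thus $\yy=-\mm\leq\zzero$. Running this argument for each $j$ shows every column, hence every entry, of $\MM^{-1}$ is nonpositive.

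I expect the only point requiring care to be the sign bookkeeping for $\pp^T\MM\mm$, which relies precisely on the disjointness of the supports of $\pp$ and $\mm$ (so that diagonal entries of $\MM$, which may be negative, never enter) together with the nonnegativity of the off-diagonal entries; everything else is routine. An alternative route is induction on $n$ via the Schur complement: write $\MM$ in block form with $(n-1)\times(n-1)$ leading block $A$, which is again negative definite with nonnegative off-diagonal entries, so $A^{-1}$ has nonpositive entries by induction; then the scalar Schur complement $c-\bb^TA^{-1}\bb$ is $\leq c<0$, and the block-inversion formula exhibits every entry of $\MM^{-1}$ as a nonpositive combination of entries of $A^{-1}$ and of the reciprocal of that negative scalar. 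The direct argument above seems shorter, so I would present it.
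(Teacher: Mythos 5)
Your argument is correct and is essentially the paper's own proof: you decompose the column $\MM^{-1}\ee_j$ into nonnegative parts with disjoint supports and evaluate the product of the positive part against $\MM$ times the column in two ways, using the nonnegative off-diagonal entries for the cross term and negative definiteness to force the positive part to vanish. The only difference is cosmetic (you conclude by showing $\pp^T\MM\pp=0$ directly, while the paper phrases it as a strict-inequality contradiction), so no further changes are needed.
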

\begin{proof}
(adapted from the Appendix of \cite{chambers})
Write $\MM^{-1}\ee_j$ as a difference of effective vectors $\qq-\rr$ with no common support vector.
Then $\qq^T \MM \rr \geq 0$. Hence (since $\MM$ is negative definite) for $\qq \neq \zzero$ we have
$$
\qq^T \MM \qq-\qq^T \MM \rr < 0.
$$
But this is $\qq^T \ee_j$, the $j$th entry of $\qq$, which is nonnegative.
Thus $\qq=\zzero$, which says that all the entries of column $j$ of $\MM^{-1}$ are nonpositive.
\end{proof}
\begin{lemma} \label{existlemma}
Suppose $\MM$ is a symmetric matrix whose off-diagonal entries are nonnegative. Suppose that $\MM$ is not negative definite. Then there is a nonzero vector $\qq$ for which $\qq$ and $\MM\qq$ are both effective.
\end{lemma}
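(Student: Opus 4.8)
The plan is to read the conclusion in coordinates: since ``effective'' means ``all coordinates nonnegative,'' I must exhibit a nonzero $\qq$ with $\qq \geq \zzero$ and $\MM\qq \geq \zzero$ entrywise. The idea is to produce such a $\qq$ supported on a carefully chosen principal submatrix of $\MM$, using Lemma~\ref{negdefinverse} to control signs. Because $\MM$ is a principal submatrix of itself and fails to be negative definite, among all principal submatrices $\MM_S$ (indexed by nonempty $S\subseteq\{1,\dots,n\}$) that are not negative definite I may pick one with $|S|$ minimal; then every strictly smaller principal submatrix of $\MM$ is negative definite.

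First I would dispose of the passage from $\MM_S$ back to $\MM$. Suppose I have produced a nonzero effective $\qq_S$ indexed by $S$ with $\MM_S\qq_S$ effective, and extend $\qq_S$ by zeros to $\qq$. The entries of $\MM\qq$ indexed by $S$ coincide with those of $\MM_S\qq_S$ and so are nonnegative, while an entry of $\MM\qq$ indexed by $i\notin S$ is a sum of terms $m_{ij}(\qq_S)_j$ with $j\in S$, hence a sum of products of off-diagonal entries of $\MM$ with nonnegative numbers, again nonnegative. So it suffices to treat $\MM_S$, and I may as well rename it $\MM$: every proper principal submatrix of $\MM$ is negative definite, but $\MM$ is not. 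If $n=1$ this merely says $\MM=[m]$ with $m\geq 0$, and $\qq=[1]$ works.

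Now assume $n\geq 2$. Every diagonal entry is then negative (being a $1\times 1$ proper principal submatrix), and writing $\MM'$ for the principal submatrix on the first $n-1$ indices we have
$$
\MM=\left[\begin{array}{cc}\MM' & \rr\\ \rr^{T} & m\end{array}\right],
$$
with $\MM'$ negative definite, $\rr$ a vector of off-diagonal entries of $\MM$ (hence effective), and $m<0$. By Lemma~\ref{negdefinverse} the matrix $\MM'$ is invertible and $(\MM')^{-1}$ has all entries nonpositive, so $-(\MM')^{-1}\rr$ is effective; I set
$$
\qq=\left[\begin{array}{c}-(\MM')^{-1}\rr\\ 1\end{array}\right],
$$
which is nonzero and effective. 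A short computation gives
$$
\MM\qq=\left[\begin{array}{c}\zzero\\ m-\rr^{T}(\MM')^{-1}\rr\end{array}\right],
$$
so the whole proof rests on showing that the scalar $m-\rr^{T}(\MM')^{-1}\rr$ --- the Schur complement of $\MM'$ in $\MM$ --- is nonnegative. This is the one point where I expect to need to think: the Schur complement criterion (apply the familiar positive-definite version to $-\MM$, or expand $\det\MM=\det\MM'\cdot\bigl(m-\rr^{T}(\MM')^{-1}\rr\bigr)$ and compare signs of leading minors) says that $\MM$ is negative definite exactly when $\MM'$ is negative definite and this Schur complement is negative. Since here $\MM'$ is negative definite while $\MM$ is not, the Schur complement is $\geq 0$, and the proof is complete.

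I would also mention a slicker route that sidesteps Lemma~\ref{negdefinverse} at the cost of a little analysis: as $\MM$ is symmetric and not negative definite, its largest eigenvalue $\lambda$ satisfies $\lambda\geq 0$; if $\ww$ is a corresponding eigenvector then the entrywise absolute value $|\ww|$ has $|\ww|^{T}\MM|\ww|\geq\ww^{T}\MM\ww=\lambda\|\ww\|^{2}$ because the off-diagonal entries of $\MM$ are nonnegative, so by the variational characterization of $\lambda$ the vector $|\ww|$ is again a $\lambda$-eigenvector, whence $\MM|\ww|=\lambda|\ww|\geq\zzero$. Taking $\qq=|\ww|$ --- and then, if one insists on staying in $\QQ^{n}$, replacing it by a rational point of the rational cone $\{\qq\geq\zzero,\ \MM\qq\geq\zzero\}$, which is nonempty once it contains a nonzero real point --- finishes the argument this way.
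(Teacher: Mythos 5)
Your proof is correct, and its core construction is in fact the same as the paper's: both take a negative definite block $\MM'$, set $\qq$ equal to $-{\MM'}^{-1}$ applied to the relevant column with a $1$ appended (so the entries of $\MM\qq$ over the block vanish), use Lemma~\ref{negdefinverse} to see that $\qq$ is effective, and are then left with showing that the one critical entry of $\MM\qq$ --- precisely the Schur complement $m-\rr^{T}{\MM'}^{-1}\rr$ --- is nonnegative, the remaining entries being sums of nonnegative products in both treatments. Where you genuinely differ is in how that critical inequality is obtained. The paper works with the largest \emph{leading} negative definite submatrix and proves $\qq^{T}\MM\qq\ge 0$ by a purely bilinear computation: it takes a witness $\ww$, supported on the next leading block, with $\ww^{T}\MM\ww\ge 0$, and expands $\qq^{T}\MM\qq$ into four terms each of known sign, never invoking Schur complements or Sylvester-type criteria. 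You instead first pass to a minimal non-negative-definite principal submatrix (a clean and correctly justified reduction, which also hands you the $1\times 1$ base case) and then quote the Schur complement criterion for negative definiteness. Both are valid: the paper's route stays self-contained at the level of ``basic linear algebra with negative definite matrices,'' while yours is shorter if the Schur complement test is taken as known. Your alternative spectral argument is also sound --- $|\ww|^{T}\MM|\ww|\ge\ww^{T}\MM\ww$ does force $|\ww|$ to be an eigenvector for the top eigenvalue, and the passage to a nonzero rational point of the cone $\{\qq\ge\zzero,\ \MM\qq\ge\zzero\}$ is the right repair for working over $\QQ$ --- but it leans on the real spectral theorem and on rational generation of rational polyhedral cones, which is more machinery than either your main argument or the paper's proof requires.
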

\begin{proof}
If the top left entry of $\MM$ is nonnegative then we can take $\qq=\ee_{1}$. Otherwise let $\MM'$
be the largest upper left square submatrix which is negative definite, and write
$$
\MM =
\left[
\begin{array}{cc}
\MM' & \AAA \\
\AAA^{T} & \BB \\
\end{array}
\right].
$$
Denote the dimension of $\MM'$ by $m'$.
Since $\MM'$ is nonsingular, there is a vector 
$$
\qq=\left[
\begin{array}{c}
\qq' \\
1 \\
0 \\
\vdots \\
0 \\
\end{array}
\right]
$$
in the kernel of the map defined by
$\left[
\begin{array}{cc}
\MM' & \AAA \\
\end{array}
\right]$,
where $\qq'$ has length $m'$.
Letting $\AAA_1$ denote the first column of $\AAA$, we see that
$
\MM' \qq'=-\AAA_1,
$
and thus $\qq'=-{\MM'}^{-1}\AAA_1$.
By Lemma \ref{negdefinverse} we see that all entries of $\qq'$ are nonnegative. 
Thus the same is true of $\qq$.
\par
Turning to $\MM \qq$, we know that it begins with $m'$ zeros. Thus the product 
$\qq^T \MM \qq$ computes entry $m'+1$ of $\MM \qq$.
Now note that by the choice of $\MM'$ there is a vector 
$$
\ww=\left[
\begin{array}{c}
\ww' \\
1 \\
0 \\
\vdots \\
0 \\
\end{array}
\right]
$$
(with $\ww'$ of length $m'$) for which $\ww^T \MM \ww \geq 0$. An easy calculation shows that $(\qq-\ww)^T \MM \qq = 0$, and by transposition we have $\qq^T \MM (\qq-\ww) = 0$. Also note that $(\qq-\ww)^T \MM (\qq-\ww) \leq 0$, since 
$\qq-\ww$ belongs to a subspace on which the associated bilinear form is negative definite.
Thus by bilinearity
$$
\qq^T \MM \qq = 
(\qq-\ww)^T \MM \qq
- (\qq-\ww)^T \MM (\qq-\ww)
+ \ww^T \MM \ww
+ \qq^T \MM (\qq-\ww)
\geq 0.
$$
As for the remaining entries of $\MM \qq$, each one is a sum of products of nonnegative numbers; thus these entries are all nonnegative.
\end{proof}
\begin{corollary} \label{restate}
Suppose that the restriction of an intersection product to a finite-dimensional subspace is not negative definite. Then there is a nonzero effective and nef element in this subspace.
\end{corollary}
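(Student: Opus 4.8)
The plan is to obtain Corollary \ref{restate} as a direct translation of Lemma \ref{existlemma} into coordinate-free language, so that all of the real work has already been done. First I would fix the subspace in question: since the word \emph{effective} only makes sense relative to the distinguished basis, I take the subspace $W$ to be spanned by a finite subset of $E$, choose an ordering of that subset, and thereby identify $W$ with $\QQ^m$ and the restriction of the intersection product to $W$ with a symmetric matrix $\MM$. Because the given form is an intersection product, the products of distinct basis elements are nonnegative, so all off-diagonal entries of $\MM$ are nonnegative; and by hypothesis $\MM$ is not negative definite. These are exactly the hypotheses of Lemma \ref{existlemma}, which then yields a nonzero vector $\qq \in \QQ^m$ such that both $\qq$ and $\MM \qq$ are effective.

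Next I would unwind what this conclusion says back in $W$. The vector $\qq$, viewed in $W$, has all coordinates nonnegative with respect to the chosen basis vectors, so $\qq$ is effective. The entries of $\MM \qq$ are precisely the products $\qq \cdot \ee$ as $\ee$ runs over the basis elements spanning $W$, so the effectivity of $\MM \qq$ is exactly the statement that $\qq \cdot \ee \geq 0$ for each such $\ee$ — which, as observed in Section \ref{decomp}, is what it means for $\qq$ to be nef with respect to $W$. Thus $\qq$ is a nonzero element of $W$ that is effective and nef with respect to $W$, and by the lemma preceding Theorem \ref{zdecomp} it is in fact nef with respect to all of $V$; that is, it is effective and nef in the sense of that lemma, as required.

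I do not expect any genuine obstacle here: the only point that needs care is the bookkeeping of the dictionary — making sure that ``effective'' is correctly identified with ``nonnegative coordinates'' relative to a basis of \emph{basis vectors}, and that the remark identifying nefness with respect to $W$ with effectivity of $\MM \qq$ is applied to the restricted matrix $\MM$ rather than to the intersection matrix of all of $V$. All of the analytic substance, namely the use of negative definiteness and of Lemma \ref{negdefinverse}, is already packaged inside Lemma \ref{existlemma}, so nothing further is needed.
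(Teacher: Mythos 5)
Your proposal is correct and is exactly what the paper intends: Corollary \ref{restate} is stated without a separate proof precisely because it is the coordinate-free restatement of Lemma \ref{existlemma}, with ``effective and nef'' licensed by the partial-converse lemma preceding Theorem \ref{zdecomp}, just as you argue. Your careful bookkeeping (identifying the subspace with $\QQ^m$ via its spanning basis vectors and reading nefness with respect to $W$ off the effectivity of $\MM\qq$) fills in the translation the paper leaves implicit, and nothing further is needed.
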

\par
We now present a procedure for constructing the Zariski decomposition of an effective element
$\vv=\sum_{i=1}^{n}c_{i}\ee_{i}$.
We will momentarily allow arbitrary real numbers as coefficients, but 
we will soon show that rational coefficients suffice.
Consider a ``candidate'' for the positive part: $\sum_{i=1}^{n}x_{i}\ee_{i}$,
where 
\begin{equation} \label{box}
x_i \leq c_i 
\end{equation} 
for each $i$.
(Look back at Figure \ref{examplecone} for motivation.)
Such an element is nef if and only if the inequality
\begin{equation} \label{cone}
\sum_{i=1}^{n} x_i (\ee_i \cdot \ee_j) \geq 0
\end{equation}
is satisfied for each $j$. 
Consider the set defined by the $2n$ inequalities in (\ref{box}) and (\ref{cone}),
together with the $n$ additional conditions $x_i \geq 0$.  Since this set is compact and nonempty (it contains the zero vector),
there is at least one point where $\sum_{i=1}^{n} x_i$ is maximized.
Let $\pp$ be the corresponding element of $V$, and let $\nn=\vv-\pp$.
We claim that this is a Zariski decomposition.
\par
By construction, the first two conditions in Theorem  \ref{zdecomp} are satisfied. Regarding the third condition, note (since $\pp$ maximizes $\sum x_i$)
that if $\ee_j$ is in the support of $\nn$ then, 
for  $\epsilon>0$ and sufficiently small, the element $\pp+\epsilon \ee_j$ is not nef. But  $(\pp+\epsilon \ee_j)\cdot \ee_i \geq 0$ for all $i \neq j$. 
Thus $(\pp+\epsilon \ee_j)\cdot \ee_j < 0$ for all positive $\epsilon$, and this implies that $\pp \cdot \ee_j \leq 0$. Since $\pp$ is nef we have $\pp \cdot \ee_j = 0$.
\par
To prove that the restriction of the intersection product to the support space of $\nn$ is negative definite, we argue by contradiction. Supposing that the restriction of the form is not negative definite, Corollary \ref{restate} tells us that there is a nonzero effective and nef element $\qq$ in the support space of $\nn$. 
Then for small $\epsilon>0$ the element $\pp+\epsilon\qq$ is nef and $\nn-\epsilon\qq$ is effective. But this contradicts the maximality of $\pp$.
\par
To prove the remaining claims of Theorem  \ref{zdecomp} (and the implicit claim that all coefficients of $\pp$ and $\nn$ are rational numbers), we need the following idea.
Define the \emph{maximum} of two elements $\vv=\sum_{i=1}^{n}x_{i}\ee_{i}$ and $\vv'=\sum_{i=1}^{n}x'_{i}\ee_{i}$ by $\max(\vv,\vv')=\sum_{i=1}^{n}\max(x_i,x'_{i})\ee_{i}$.
\begin{lemma} \label{maxnef}
If $\pp$ and $\pp'$ are both nef, then so is $\max(\pp,\pp')$.
\end{lemma}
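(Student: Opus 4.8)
\emph{Proof sketch.} The plan is to verify the defining inequalities of nefness one basis element at a time. Recall from the discussion preceding Example~\ref{nefnotnef} that a vector is nef with respect to $V$ as soon as its product with each basis element is nonnegative. So fix a basis element $\ee$, set $\mm=\max(\pp,\pp')$, and let $m$, $p$, $p'$ denote the coefficients of $\ee$ in $\mm$, $\pp$, $\pp'$ respectively. By definition $m=\max(p,p')$, so after possibly interchanging the roles of $\pp$ and $\pp'$ we may assume $m=p$; that is, the $\ee$-coordinate of $\mm$ agrees with that of $\pp$.

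The key observation is then that two things pull in the same direction for every \emph{other} basis element $\ee'$ occurring in the (finite) combined support of $\pp$ and $\pp'$: first, the coefficient of $\ee'$ in $\mm$ is at least its coefficient in $\pp$, by definition of the maximum; second, $\ee\cdot\ee'\geq 0$ because the form is an intersection product. Hence the corresponding term in the expansion of $\mm\cdot\ee$ over the combined support is at least the corresponding term in the expansion of $\pp\cdot\ee$ — with equality in the $\ee$-term itself, by the reduction above. Summing over all terms gives $\mm\cdot\ee\geq\pp\cdot\ee\geq 0$, the last inequality because $\pp$ is nef. Since $\ee$ was arbitrary, $\mm$ is nef.

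I do not expect a genuine obstacle here; the one point requiring care is that the reduction ``$m=p$'' must be carried out afresh for each test vector $\ee$, since which of $\pp$ and $\pp'$ realizes the maximum in a given coordinate may vary from coordinate to coordinate — one cannot assume $\pp\geq\pp'$ globally. It is worth noting that this same lemma, together with the term-by-term comparison used above, is exactly what will let us conclude that the positive part produced by the earlier construction is the unique largest nef vector lying below $\vv$, and hence has rational coefficients.
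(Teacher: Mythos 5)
Your proof is correct and is essentially the paper's own argument: for each test basis element, reduce to the case where the maximum agrees with $\pp$ in that coordinate, and then use the nonnegativity of the off-diagonal products to compare $\max(\pp,\pp')\cdot\ee$ with $\pp\cdot\ee$ term by term. The point you flag — that the reduction must be redone for each coordinate — is exactly the ``we may assume $x_j \geq x'_j$'' step in the paper, so there is no gap.
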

\begin{proof}
The $j$th inequality in (\ref{cone}) involves at most one negative coefficient, namely $\ee_j \cdot \ee_j$. Suppose that  $\pp=\sum_{i=1}^{n}x_{i}\ee_{i}$ and $\pp'=\sum_{i=1}^{n}x'_{i}\ee_{i}$ satisfy this inequality. We may assume that $x_j \geq x'_j$. Then $\max(\pp,\pp')-\pp$ satisfies the inequality; hence $\max(\pp,\pp')$ satisfies it as well.
\end{proof}
Here is the proof of uniqueness. 
Suppose that $\vv=\pp+\nn$ and $\vv=\pp'+\nn'$ are two Zariski decompositions of $\vv$. 
Let $\max(\pp,\pp')=\pp+\sum x_{i}\ee_{i}$, 
where the sum is over the support of $\nn$ and the coefficients are nonnegative. Since $\max(\pp,\pp')$ is nef, we know that for each element $\ee_j$ of the support of $\nn$ we have
$$
\sum x_{i}\ee_{i} \cdot \ee_{j} = \max(\pp,\pp') \cdot \ee_{j} \geq 0.
$$
Thus
$$
\sum x_{i}\ee_{i} \cdot \sum x_{j}\ee_{j}=\sum \sum x_{i}x_{j}\ee_{i} \cdot \ee_{j} \geq 0.
$$
Since the intersection product is negative definite on the support space of $\nn$, all $x_i=0$. Thus $\pp=\max(\pp,\pp')$. Similar reasoning shows that $\pp'=\max(\pp,\pp')$, and thus $\pp=\pp'$.
\par
Having uniqueness, we can now note that by our construction the positive part of the Zariski decomposition is an effective vector.
\par
Finally we argue that the positive and negative parts have rational coefficients.
Let $\pp=\sum_{i=1}^{n}x_{i}\ee_{i}$.
Then its coefficients satisfy $n$ linear equations, namely:
\begin{gather*}
\sum_{i=1}^{n} x_i (\ee_i \cdot \ee_j) = 0
\text{ for each basis element $\ee_j$ in the support of $\nn$}, \\
x_j = c_j
\text{ for each basis element $\ee_j$ not in the support of $\nn$}.
\end{gather*}
In matrix form (and with the basis suitably reordered), we have the following equation:
$$
\left[
\begin{array}{cc}
\NN & \AAA \\
\zzero & \II \\
\end{array}
\right]\XX=\left[
\begin{array}{c}
\zzero \\
\CC \\
\end{array}
\right],
$$
where $\NN$ is negative definite, $\zzero$ is a zero matrix, and $\II$ is an identity matrix. This is a nonsingular system in which all entries are rational numbers, 
and we know that its unique solution gives the positive part of the Zariski decomposition.


\section{Zariski's original algorithm.}
Our construction gives the Zariski decomposition of an effective vector in one fell swoop. In Zariski's original paper, by contrast, he built up the negative part in stages.\footnote{This comparison is somewhat unfair, since our construction simply says to maximize a certain linear function on a polytope. To actually discover the location of the maximum one would have to invoke a step-by-step algorithm such as the simplex method.}
Our exposition of his algorithm relies on the last chapter of \cite{bad}. Let us call a finite subspace of $V$ a \emph{special subspace} if it is spanned by a subset of the basis. 
We say that a subspace is \emph{negative definite} if the restriction of the intersection product to the subspace is negative definite.
The basic idea is to work toward the correct support space for the negative part of the specified vector, through an increasing sequence of negative definite special subspaces. 
\begin{example} \label{algoex}
Suppose that $V$ is finite-dimensional with intersection matrix
$$
\MM =
\left[
\begin{array}{rrrr}
-2 & 0 & 1 & 1 \\
0 & -2 & 1 & 2 \\\
1 & 1 & -2 & 0 \\\
1 & 2 & 0 & -2
\end{array}
\right].
$$
Figure \ref{lattice} shows the lattice of negative definite subspaces. In Example~\ref{algexample} we will show how Zariski's algorithm hunts through this lattice.
\end{example}
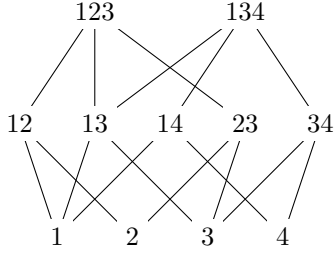
\begin{figure}
\begin{center}
\begin{tikzpicture} 
\draw (0.5,0) node [fill=white] {1} -- (0,1.5) node [fill=white] {12};
\draw (0.5,0) node [fill=white] {1} -- (1,1.5) node [fill=white] {13};
\draw (0.5,0) node [fill=white] {1} -- (2,1.5) node [fill=white] {14};
\draw (1.5,0) node [fill=white] {2} -- (0,1.5) node [fill=white] {12};
\draw (1.5,0) node [fill=white] {2} -- (3,1.5) node [fill=white] {23};
\draw (2.5,0) node [fill=white] {3} -- (1,1.5) node [fill=white] {13};
\draw (2.5,0) node [fill=white] {3} -- (3,1.5) node [fill=white] {23};
\draw (2.5,0) node [fill=white] {3} -- (4,1.5) node [fill=white] {34};
\draw (3.5,0) node [fill=white] {4} -- (2,1.5) node [fill=white] {14};
\draw (3.5,0) node [fill=white] {4} -- (4,1.5) node [fill=white] {34};
\draw (0,1.5) node [fill=white] {12} -- (1,3) node [fill=white] {123};
\draw (1,1.5) node [fill=white] {13} -- (1,3) node [fill=white] {123};
\draw (3,1.5) node [fill=white] {23} -- (1,3) node [fill=white] {123};
\draw (1,1.5) node [fill=white] {13} -- (3,3) node [fill=white] {134};
\draw (2,1.5) node [fill=white] {14} -- (3,3) node [fill=white] {134};
\draw (4,1.5) node [fill=white] {34} -- (3,3) node [fill=white] {134};
\end{tikzpicture}
\caption{The lattice of negative definite subspaces in Example \ref{algoex}. The subspace spanned by basis vectors $\ee_1$ and $\ee_3$, for example, is indicated by 13.}
\label{lattice}
\end{center}
\end{figure}
The algorithm relies on three lemmas.
\begin{lemma} \label{firstlem}
(cf. Lemma 14.9 of \cite{bad}) 
Let $N$ be a negative definite special subspace, and suppose that $\nn \in N$ is a vector for which $\nn \cdot \ee \leq 0$ for every basis element $\ee \in N$. Then $\nn$ is effective.
\end{lemma}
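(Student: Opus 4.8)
The plan is to mimic almost verbatim the argument used in Lemma~\ref{negdefinverse}, since the hypothesis ``$\nn\cdot\ee\le 0$ for every basis element $\ee\in N$'' plays exactly the role that ``$\MM^{-1}\ee_j$ has a nonpositive $j$th entry'' played there. First I would write $\nn$ as a difference $\nn=\qq-\rr$ of effective vectors having no common support vector. Because $N$ is a special subspace containing the support space of $\nn$, and the supports of $\qq$ and $\rr$ are each contained in the support of $\nn$, both $\qq$ and $\rr$ lie in $N$.

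Next I would observe that $\qq\cdot\rr\ge 0$: expanding the product as $\sum_{i,j}q_i r_j(\ee_i\cdot\ee_j)$, every term that can be nonzero pairs a basis vector in the support of $\qq$ with one in the (disjoint) support of $\rr$, hence involves distinct basis elements, whose product is nonnegative. Similarly, since $\rr$ is effective and each basis element in its support lies in $N$, the hypothesis gives $\nn\cdot\rr=\sum_i r_i(\nn\cdot\ee_i)\le 0$.

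Combining these: from $\nn\cdot\rr=\qq\cdot\rr-\rr\cdot\rr\le 0$ and $\qq\cdot\rr\ge 0$ we get $\rr\cdot\rr\ge\qq\cdot\rr\ge 0$. But $\rr\in N$ and the intersection product is negative definite on $N$, so $\rr\cdot\rr\le 0$ with equality only for $\rr=\zzero$. Hence $\rr=\zzero$, so $\nn=\qq$ is effective, as claimed.

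I do not expect any real obstacle here; the only points requiring a moment's care are the bookkeeping that forces $\qq,\rr\in N$ (which is where the hypothesis that $N$ is \emph{special} is used) and the observation that $\nn\cdot\rr\le 0$ follows from the stated condition precisely because the support of $\rr$ sits inside the support of $\nn\subseteq N$. Everything else is the same negative-definiteness squeeze already deployed earlier in the paper.
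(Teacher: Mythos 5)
Your argument is correct and is essentially the paper's own proof: the same decomposition $\nn=\qq-\rr$ into effective parts with disjoint support, the same observations $\qq\cdot\rr\ge 0$ and $\nn\cdot\rr\le 0$, and the same negative-definiteness squeeze forcing $\rr=\zzero$. The extra bookkeeping you supply (that $\qq,\rr\in N$ because $N$ is special) is a detail the paper leaves implicit, and it is fine.
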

\begin{proof}
As in the proof of Lemma \ref{negdefinverse},
write $\nn=\qq-\rr$, where $\qq$ and $\rr$ are effective but
have no common support vector.
Then $\qq \cdot \rr \geq 0$. Hence
$$
\rr \cdot \rr \geq \rr \cdot \rr - \qq \cdot \rr = -\nn \cdot \rr \geq 0.
$$
Since the subspace is negative definite this implies that $\rr=\zzero$, i.e., that $\nn$ is effective.
\end{proof}

\begin{lemma} \label{secondlem}
Suppose that $\vv \in V$ is an effective vector. Suppose that $N$ is a negative definite special subspace of the support space of $\vv$. Suppose that $\pp$ is a vector satisfying these two conditions:
\begin{enumerate}
\item $\pp \cdot \ee=0$ for each basis element $\ee \in N$;
\item $\vv-\pp$ is an element of $N$.
\end{enumerate}
Then $\pp$ is effective.
\end{lemma}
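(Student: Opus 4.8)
The plan is to split $\pp$ according to whether its support vectors lie in $N$, and then to recognize the $N$-part as satisfying the hypotheses of Lemma~\ref{firstlem}. Write $\pp = \pp_0 + \pp_1$, where $\pp_0$ collects those terms of $\pp$ whose basis vectors belong to $N$ (so that $\pp_0 \in N$) and $\pp_1$ collects the remaining terms. Set $\nn = \vv - \pp$; by the second hypothesis $\nn \in N$, so passing from $\vv$ to $\vv - \nn = \pp$ alters only the coefficients attached to basis vectors of $N$. Consequently $\pp_1$ agrees term-by-term with the part of $\vv$ supported outside $N$; since $\vv$ is effective, all of those coefficients are nonnegative, and hence $\pp_1$ is effective. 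Because $\pp_0$ and $\pp_1$ have disjoint supports, it now suffices to prove that $\pp_0$ is effective.

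For that I would verify the two hypotheses of Lemma~\ref{firstlem} for the vector $\pp_0$ inside the negative definite special subspace $N$. The first, that $\pp_0 \in N$, holds by construction. For the second, take any basis element $\ee \in N$ and compute $\pp_0 \cdot \ee = (\pp - \pp_1)\cdot \ee = \pp \cdot \ee - \pp_1 \cdot \ee$; the term $\pp \cdot \ee$ vanishes by the first hypothesis of the statement, so $\pp_0 \cdot \ee = -(\pp_1 \cdot \ee)$. Now $\pp_1$ is effective and none of its support vectors lies in $N$, so $\pp_1 \cdot \ee$ is a nonnegative linear combination of products of \emph{distinct} basis elements, hence nonnegative; therefore $\pp_0 \cdot \ee \leq 0$ for every basis element $\ee \in N$. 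Lemma~\ref{firstlem} then gives that $\pp_0$ is effective, and hence so is $\pp = \pp_0 + \pp_1$.

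I do not expect a genuine obstacle here: the argument is essentially bookkeeping with supports, plus one appeal to Lemma~\ref{firstlem}. The point that most needs care is the claim that $\pp_1$ — defined merely as ``$\pp$ with its $N$-part removed'' — coincides with the outside-$N$ part of $\vv$ and is therefore effective. This is exactly where the hypothesis that $\vv$ itself is effective (rather than only that $N$ sits inside its support space) gets used, and it is what upgrades the conclusion from ``$\pp_0$ effective'' to ``$\pp$ effective.'' Everything else, in particular the sign of $\pp_1 \cdot \ee$, is immediate from the defining property of an intersection product.
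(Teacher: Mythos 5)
Your proof is correct. It rests on the same basic bookkeeping as the paper's: split $\pp$ into its $N$-part and its outside-$N$ part, and observe that since $\vv-\pp\in N$, the outside-$N$ part of $\pp$ is inherited from the effective vector $\vv$ and is therefore effective. Where you diverge is in how you handle the $N$-part. The paper works in the support space of $\vv$, writes the intersection matrix in block form with $\MM$ the (negative definite) block for $N$, solves $\XX=-\MM^{-1}\AAA\YY$ explicitly, and concludes by citing Lemma~\ref{negdefinverse} (all entries of $\MM^{-1}$ nonpositive). You instead show directly that $\pp_0\cdot\ee=-\,\pp_1\cdot\ee\leq 0$ for every basis element $\ee\in N$ --- which is the coordinate-free form of the paper's relation $\MM\XX=-\AAA\YY$ --- and then invoke Lemma~\ref{firstlem} to conclude that $\pp_0$ is effective. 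Since Lemma~\ref{firstlem} is essentially the subspace-level repackaging of Lemma~\ref{negdefinverse}, the mathematical content is the same, but your route avoids the explicit matrix inversion and block notation, and it reuses a lemma the algorithm needs anyway; the paper's version makes the dependence on the sign structure of $\MM^{-1}$ explicit, which is the tool it also wants on hand for Lemma~\ref{negdefinverse}-style arguments elsewhere. All the delicate points (that $\pp_1$ coincides with the outside-$N$ part of $\vv$, and that $\pp_1\cdot\ee\geq 0$ because the relevant basis elements are distinct) are handled correctly.
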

\begin{proof}
Work in the support space of $\vv$, which is finite-dimensional. Rearrange the basis so that the intersection matrix is 
$$
\left[
\begin{array}{cc}
\MM & \AAA \\
\AAA^{T} & \BB \\
\end{array}
\right],
$$
where $\MM$ is the negative definite intersection matrix for the subspace $N$.
Write $\pp$ as a column matrix with respect to this basis:
$$
\pp=\left[
\begin{array}{c}
\XX \\
\YY \\
\end{array}
\right].
$$
Then, since $\pp \cdot \ee=0$ for each basis element $\ee \in N$,
$$
\left[
\begin{array}{cc}
\MM & \AAA \\
\AAA^{T} & \BB \\
\end{array}
\right]
\left[
\begin{array}{c}
\XX \\
\YY \\
\end{array}
\right]
=
\left[
\begin{array}{c}
\zzero \\
\ZZ \\
\end{array}
\right],
$$
and thus
$\XX=-{\MM}^{-1}\AAA\YY$.
We know that all entries of $\AAA$ and $\YY$ are nonnegative. (Note that the column vector representing $\vv$ would likewise include $\YY$.)
 By Lemma \ref{negdefinverse}, all entries of 
${\MM}^{-1}$ are nonpositive. Thus all entries of $\XX$ are nonnegative.
\end{proof}
The following more technical lemma is akin to Lemma 14.12 of \cite{bad}, but we give a more elementary proof.
\begin{lemma} \label{thirdlem}
Suppose that $N \subset W$ are two special subspaces, with $N$ being negative definite. Suppose there is an effective vector $\vv\in V$ with the following properties:
\begin{enumerate}
\item $\vv \cdot \ee \leq 0$ for each basis element $\ee \in N$;
\item $\vv \cdot \ee < 0$ for each basis element in $\ee \in W \setminus N$.
\end{enumerate}
Then $W$ is also a negative definite subspace.
\end{lemma}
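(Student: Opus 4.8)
The plan is to argue by contradiction, reducing everything to the criterion already packaged in Corollary \ref{restate}. Suppose $W$ were not negative definite. Since $W$ is a finite-dimensional special subspace, Corollary \ref{restate} would furnish a nonzero effective and nef vector $\qq \in W$. I would write $\qq = \sum_{\ee \in S} a_\ee \ee$, where $S$ is the support of $\qq$ (a nonempty subset of the basis elements spanning $W$) and each $a_\ee > 0$.

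Next I would pair $\qq$ against the given vector $\vv$. On one hand, $\qq$ is nef and $\vv$ is effective, so $\vv \cdot \qq \geq 0$. On the other hand, expanding along the support of $\qq$ and using the two hypotheses on $\vv$,
$$
\vv \cdot \qq = \sum_{\ee \in S} a_\ee\,(\vv \cdot \ee) \leq 0,
$$
since every term with $\ee \in N$ is $\leq 0$ and every term with $\ee \in W \setminus N$ is strictly negative. Hence $\vv \cdot \qq = 0$. But a single strictly negative term would already make the sum negative, so in fact $S$ contains no basis element of $W \setminus N$; that is, $\qq \in N$.

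Now the contradiction is immediate: $\qq$ is a nonzero vector lying in the negative definite subspace $N$, so $\qq \cdot \qq < 0$, while $\qq$ being effective and nef forces $\qq \cdot \qq \geq 0$. This rules out the assumption that $W$ fails to be negative definite, proving Lemma \ref{thirdlem}.

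The only real subtlety — and the step I would be most careful about — is the role of the \emph{strict} inequalities in hypothesis (2): they are exactly what confines the support of $\qq$ to $N$, and without that one cannot invoke the negative-definiteness of $N$ to finish. Everything else (the applicability of Corollary \ref{restate} to the special subspace $W$, and the fact that an effective nef vector has nonnegative self-intersection) is routine given the earlier results.
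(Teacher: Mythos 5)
Your proposal is correct and follows essentially the same route as the paper: a proof by contradiction via Corollary \ref{restate}, pairing the resulting nonzero effective and nef vector $\qq$ against $\vv$ and against the negative definiteness of $N$. The paper merely arranges the same ingredients in the opposite order (it notes $\qq \notin N$ first, so that some support element of $\qq$ lies in $W \setminus N$ and hence $\vv \cdot \qq < 0$, contradicting nefness), while you derive $\vv\cdot\qq = 0$, force the support of $\qq$ into $N$, and contradict negative definiteness there — a trivial reordering of the same argument.
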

\begin{proof}
We give a proof by contradiction. Suppose that $W$ is not negative definite. Then by Corollary \ref{restate} there is a nonzero effective and nef element $\qq$ in $W$. Since $N$ is a negative definite subspace, $\qq \notin N$.
Thus $\vv \cdot \qq < 0$, but this contradicts the fact that $\qq$ is nef.
\end{proof}
Here is Zariski's algorithm for the decomposition of a specified effective vector $\vv$. 
If $\vv$ is nef, then the decomposition is given by $\pp=\vv$ and $\nn=\zzero$.
Otherwise let $N_1$ be the subspace spanned by all basis vectors $\ee$ for which 
$\vv \cdot \ee < 0$. Since $\vv$ is effective, $N_1$ is a subspace of its support space and hence has finite dimension.
By Lemma \ref{thirdlem} (with $W=N_1$ and $N$ trivial), it is a negative definite subspace. Since the restriction of the intersection product to $N_1$ is nonsingular, there is a unique vector $\nn_1 \in N_1$ satisfying this system of equations:
$$
\nn_1 \cdot \ee = \vv \cdot \ee \quad \text{for each basis vector $\ee \in N_1$}.
$$
By Lemma \ref{firstlem}, $\nn_1$ is effective. Let $\vv_1=\vv-\nn_1$, which by Lemma \ref{secondlem} is an effective vector.
If $\vv_1$ is nef with respect to $V$, then we have found the Zariski decomposition: 
$\pp=\vv_1$ and $\nn=\nn_1$.
\par
Otherwise proceed inductively as follows. By an inductive hypothesis, $\vv_{k-1}$ is an effective vector satisfying $\vv_{k-1} \cdot \ee=0$ for each basis vector $\ee\in N_{k-1}$. Let $N_k$ be the subspace spanned by $N_{k-1}$ and by all basis vectors $\ee$ for which $\vv_{k-1}\cdot\ee<0$. Again $N_k$ is finite-dimensional. By Lemma \ref{thirdlem} (with $N=N_{k-1}$ and $W=N_k$), the subspace $N_k$ is negative definite. Hence there is a unique vector $\nn_k \in N_k$ satisfying this system of equations:
$$
\nn_k \cdot \ee = \vv_{k-1} \cdot \ee \quad \text{for each basis vector $\ee \in N_k$}.
$$
By Lemma \ref{firstlem}, $\nn_k$ is effective. Let $\vv_k=\vv_{k-1}-\nn_k$, which is effective by Lemma \ref{secondlem}. If $\vv_k$ is nef with respect to $V$, then the Zariski decomposition is $\pp=\vv_k$ and
$\nn=\nn_1+ \cdots + \nn_k$. Otherwise $\vv_k \cdot \ee = 0$ for each basis vector 
$\ee\in N_k$, which is the required inductive hypothesis. Since the sequence of subspaces $N_1 \subset N_2 \subset \cdots$ is strictly increasing and contained in the support space of $\vv$, this process eventually terminates.
\par
\begin{example}\label{algexample}
Using the same intersection matrix as in Example \ref{algoex}, we apply Zariski's algorithm to the vector 
$$
\vv=
\left[
\begin{array}{c}
8 \\
4 \\
5 \\
9
\end{array}
\right].
$$
Here $N_1$ is spanned by $\ee_1$ and $\ee_4$, and 
$$
\nn_1=
\left[
\begin{array}{c}
2 \\
0 \\
0 \\
2
\end{array}
\right].
$$
Since the complementary vector
$\vv-\nn_1$ is nef, the Zariski decomposition is as follows:
$$
\left[\begin{array}{c}
8 \\ 4 \\ 5 \\ 9
\end{array}\right]
=
\left[\begin{array}{c}
6 \\ 4 \\ 5 \\ 7
\end{array}\right]
+
\left[\begin{array}{c}
2 \\ 0 \\ 0 \\ 2
\end{array}\right].
$$
Thus the algorithm works in just one step.
\par
For the vector 
$$
\vv=
\left[
\begin{array}{c}
4 \\
2 \\
3 \\
6
\end{array}
\right],
$$
however, the algorithm requires three steps, as follows: $N_1$ is spanned by the single vector $\ee_4$, and 
$$
\nn_1=
\left[
\begin{array}{c}
0 \\
0 \\
0 \\
2
\end{array}
\right].
$$
The vector $\vv_1=\vv-\nn_1$ is not nef. We find that $N_2$ is spanned by $\ee_1$ and $\ee_4$, and that 
$$
\nn_2=
\left[
\begin{array}{c}
2/3 \\
0 \\
0 \\
1/3
\end{array}
\right].
$$
Again $\vv_2=\vv_1-\nn_2$ is not nef. Now $N_3$ is spanned by $\ee_1$, $\ee_3$, and $\ee_4$, with 
$$
\nn_3=
\left[
\begin{array}{c}
1/3 \\
0 \\
1/2 \\
1/6
\end{array}
\right],
$$
so that $\vv_3=\vv_2-\nn_3$ is nef.
The Zariski decomposition is
$$
\left[\begin{array}{c}
4 \\ 2 \\ 3 \\ 6
\end{array}\right]
=
\left[\begin{array}{c}
3 \\ 2 \\ 5/2 \\ 7/2
\end{array}\right]
+
\left[\begin{array}{c}
1 \\ 0 \\ 1/2 \\ 5/2
\end{array}\right].
$$
\end{example}


\section{Numerical equivalence.}
We continue to suppose that $V$ is a vector space over $\QQ$ equipped with an intersection product with respect to a fixed basis. We say that two elements $\vv$ and $\vv'$ of $V$ are \emph{numerically equivalent in $V$} if 
$\vv \cdot \ww=\vv' \cdot \ww$ for each element $\ww \in V$. A vector numerically equivalent in $V$ to $\zzero$ is said to be \emph{numerically trivial in $V$}.
\begin{proposition} \label{numeqthm}
Suppose that $\vv$ and $\vv'$ are effective vectors which are numerically equivalent in $V$.
Let $\vv=\pp+\nn$ and $\vv'=\pp'+\nn'$ be their Zariski decompositions.
Then $\nn=\nn'$.
\end{proposition}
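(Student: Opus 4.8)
The plan is to invoke the uniqueness clause of Theorem~\ref{zdecomp}. Rather than comparing $\nn$ and $\nn'$ directly, I would exhibit a Zariski decomposition of $\vv$ whose negative part is exactly $\nn'$; uniqueness then forces $\nn=\nn'$.

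Concretely, set $\pp'':=\vv-\nn'$ and propose $\vv=\pp''+\nn'$ as the candidate decomposition. Conditions (2) and (4) of Theorem~\ref{zdecomp} hold for free, because $\nn'$ is the negative part of $\vv'$: it is effective, and the intersection product restricted to its support space is negative definite. So the real content is checking conditions (1) and (3). For (3), let $\ee$ be a basis element in the support of $\nn'$; since $\vv'=\pp'+\nn'$ is a Zariski decomposition we have $\pp'\cdot\ee=0$, that is, $(\vv'-\nn')\cdot\ee=0$, and numerical equivalence gives $\vv\cdot\ee=\vv'\cdot\ee$, whence $\pp''\cdot\ee=(\vv-\nn')\cdot\ee=(\vv'-\nn')\cdot\ee=0$. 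For (1), note that $\pp''-\pp'=\vv-\vv'$ is numerically trivial in $V$, so $\pp''$ is numerically equivalent in $V$ to $\pp'$; since nefness with respect to $V$ is just the requirement that the product with every effective vector be nonnegative, it depends only on the numerical equivalence class, and $\pp'$ is nef, so $\pp''$ is nef as well. Therefore $\vv=\pp''+\nn'$ satisfies all four conditions, and the uniqueness part of Theorem~\ref{zdecomp} yields $\nn=\nn'$ (and incidentally $\pp=\pp''$, although $\pp$ and $\pp'$ themselves generally differ, by the numerically trivial vector $\vv-\vv'$).

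I do not anticipate a serious obstacle: the argument is only a few lines once one adopts the right viewpoint. The single point needing care is the choice of candidate — attempting to relate $\pp$ to $\pp'$ or $\nn$ to $\nn'$ head-on leads nowhere, whereas assembling the pair $(\vv-\nn',\,\nn')$ and appealing to uniqueness succeeds precisely because the two "negative-part" conditions (2) and (4) are inherited verbatim from the decomposition of $\vv'$, while the two remaining conditions are insensitive to altering a vector by something numerically trivial.
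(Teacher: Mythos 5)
Your proof is correct and is essentially the paper's own argument run in mirror image: the paper forms the candidate decomposition $\vv'=(\vv'-\nn)+\nn$ and invokes uniqueness for $\vv'$, while you form $\vv=(\vv-\nn')+\nn'$ and invoke uniqueness for $\vv$, spelling out the verification of conditions (1)--(4) that the paper leaves implicit. No gap; nothing further is needed.
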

\begin{proof}
Note that $\vv'-\nn$ is numerically equivalent to $\pp$. Thus $\vv'=(\vv'-\nn)+\nn$ satisfies all four requirements for a Zariski decomposition of  $\vv'$.
By uniqueness of this decomposition, we must have 
$\nn=\nn'$.
\end{proof}
\begin{example} \label{neqexample}
Suppose that $V$ is a 5-dimensional vector space with intersection matrix
$$
\left[
\begin{array}{rrrrr}
-2 & 1 & 1 & 1 & 1\\
1 & -1 & 0 & 0 & 0\\
1 & 0 & -1 & 0 & 0\\
1 & 0 & 0 & -1 & 0\\
1 & 0 & 0 & 0 & 1
\end{array}
\right].
$$
Let $\vv=3\ee_1+\ee_2+\ee_3+\ee_4$, and let $\vv'$ be the numerically equivalent vector $2\ee_1+\ee_5$. Then the Zariski decompositions are as follows:
$$
\pp=\frac{3}{2}\ee_1+\ee_2+\ee_3+\ee_4,
\qquad
\pp'=\frac{1}{2}\ee_1+\ee_5,
\qquad
\nn=\nn'=\frac{3}{2}\ee_1.
$$
\end{example}
Using the notion of numerical equivalence, we can extend Zariski decomposition to a potentially larger set of vectors. We say that a vector $\ww \in V$ is \emph{quasi-effective in $V$} if $\ww \cdot \vv \geq 0$ for every element $\vv\in V$ which is nef with respect to $V$.\footnote{We have heard ``quef'' as a short form. The terminology ``pseudo-effective'' is also in use. }
In particular each effective element is quasi-effective; more generally, any vector numerically equivalent to an effective vector is quasi-effective.
\begin{proposition}
Suppose that $\MM$ is an intersection matrix for a finite-dimensional vector space 
$V$. Then $\ww$ is quasi-effective in $V$ if and only if $\ww^T \MM \vv \geq 0$ whenever $\MM\vv$ is effective. In particular if $\MM$ is nonsingular, then $\ww$ is quasi-effective in $V$ if and only if it is effective.
\end{proposition}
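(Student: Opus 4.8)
The plan is to treat the two assertions separately; both reduce to bookkeeping with definitions already in place. For the first assertion I would simply translate the definition of quasi-effectiveness into matrix language, in the same spirit as the earlier matrix reformulation of ``nef.'' Recall that the product is $\ww \cdot \vv = \ww^T \MM \vv$ (using symmetry of $\MM$), and that in the finite-dimensional case a vector $\vv$ is nef with respect to $V$ exactly when $\MM\vv$ is effective. Since $\ww$ is quasi-effective precisely when $\ww \cdot \vv \geq 0$ for every $\vv$ that is nef with respect to $V$, this is word-for-word the statement that $\ww^T \MM \vv \geq 0$ whenever $\MM\vv$ is effective. No further work is needed for this part.

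For the second assertion, assume $\MM$ is nonsingular. One direction — every effective vector is quasi-effective — was already recorded. For the converse I would exploit the description, noted earlier, that when $\MM$ is invertible a vector $\vv$ is nef with respect to $V$ if and only if $\vv = \MM^{-1}\mathbf u$ for some effective $\mathbf u$. Feeding this into the criterion from the first part, $\ww$ is quasi-effective if and only if $\ww^T \MM (\MM^{-1}\mathbf u) = \ww^T \mathbf u \geq 0$ for every effective $\mathbf u$. Letting $\mathbf u$ range over the basis vectors $\ee_j$ then forces the $j$th coordinate of $\ww$, which is $\ww^T \ee_j$, to be nonnegative for each $j$; hence $\ww$ is effective. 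The opposite implication is immediate, so the equivalence follows.

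I do not anticipate a genuine obstacle here: the whole argument is definition-chasing plus the substitution $\vv = \MM^{-1}\mathbf u$. The only point requiring any care is to invoke the nonsingularity of $\MM$ at precisely the right moment — it is exactly what allows the nef cone to be rewritten as $\MM^{-1}$ applied to the effective cone, and this is what collapses quasi-effectiveness down to plain coordinatewise nonnegativity. (One could also remark, as a sanity check, that when $\MM$ is nonsingular no nonzero vector is numerically trivial, so the ``potentially larger set'' of quasi-effective vectors is in fact just the effective vectors, consistent with the conclusion.)
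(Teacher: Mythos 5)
Your proposal is correct and follows essentially the same route as the paper: translate quasi-effectiveness via the observation that $\vv$ is nef exactly when $\MM\vv$ is effective, then use nonsingularity to substitute $\vv=\MM^{-1}\mathbf u$ and reduce the condition to $\ww^T\mathbf u\geq 0$ for all effective $\mathbf u$, which forces $\ww$ to be effective. Your step of testing against the basis vectors $\ee_j$ simply spells out what the paper leaves as a one-line remark, so there is nothing to add.
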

\begin{proof}
The first sentence uses the definitions, together with the previous observation that a vector $\vv$ is nef with respect to $V$ if and only if $\MM\vv$ is effective. If the matrix is nonsingular then each effective element can be written as $\MM\vv$ for some nef element $\vv$. Thus in this case $\ww$ is quasi-effective in $V$ if and only if $\ww^T \vv \geq 0$ for each effective element $\vv$. An element $\ww$ satisfying the latter condition must be effective.
\end{proof}
In general, however, there may be quasi-effective vectors which are not effective. 
In Example \ref{neqexample}, for instance, the vector $\ww=\frac{7}{2}\ee_1+\frac{3}{2}\ee_2+\frac{3}{2}\ee_3+\frac{3}{2}\ee_4-\frac{1}{2}\ee_5$ is quasi-effective, since it is numerically equivalent to the effective vector $2\ee_1+\ee_5$.
\par
Here is another example, which shows that the notion of quasi-effectiveness is ``volatile'' as one passes to subspaces.
\begin{example}
Start with the following $(2k)\times (2k)$ matrix.
\[
\PP_{2k} = \left[
\begin{array}{ccccccc}
1 & 0 & 1 & 0 & \cdots & 1 & 0 \\
0 & 0 & 1 & 0 & \cdots & 1 & 0  \\
1 & 1 & 1 & 0 & \cdots & 1 & 0  \\
0 & 0 & 0 & 0 & \cdots & 1 & 0  \\
\vdots & \vdots & \vdots & \vdots & \ddots & \vdots & \vdots  \\
1 & 1 & 1 & 1 & \cdots & 1 & 0  \\
0 & 0 & 0 & 0 & \cdots & 0 & 0
\end{array}
\right]
\]
Use row and column operations to construct an intersection matrix
$\MM_{2k}$ as follows: beginning at $i=2$, replace column $2i$ by itself plus column 1 minus column 2, and do the corresponding operation on row $2i$;
continue this up until $i=k$.
Here is an illustration when $k=3$:
\[
\PP_6 = \left[
\begin{array}{cccccc}
1 & 0 & 1 & 0 & 1 & 0 \\
0 & 0 & 1 & 0 & 1 & 0 \\
1 & 1 & 1 & 0 & 1 & 0 \\
0 & 0 & 0 & 0 & 1 & 0 \\
1 & 1 & 1 & 1 & 1 & 0 \\
0 & 0 & 0 & 0 & 0 & 0
\end{array}
\right]
\quad\text{and}\quad \MM_6 = \left[
\begin{array}{cccccc}
1 & 0 & 1 & 1 & 1 & 1 \\
0 & 0 & 1 & 0 & 1 & 0 \\
1 & 1 & 1 & 0 & 1 & 0 \\
1 & 0 & 0 & 1 & 1 & 1 \\
1 & 1 & 1 & 1 & 1 & 0 \\
1 & 0 & 0 & 1 & 0 & 1
\end{array}
\right].
\]
Let $\MM_j$ and $\PP_j$ denote the upper left  $j\times j$ submatrices
(noting that this is consistent with our previous usage when $j$ is even).
Note that $\det \MM_j=\det \PP_j$ for all $j$. In particular the
matrix $\MM_{j}$ is singular if and only if $j$ is even.
\par
Now let $V_j$
denote the subspace spanned by the first $j$ basis vectors, and
consider the vector $\ww=\ee_1-\ee_2$. If $j>1$ is odd, then 
$\ww$ is not quasi-effective in $V_j$, since the
matrix is nonsingular and $\ww$ is not effective.
If $j>2$ is even, however, then $\ww$ is numerically equivalent in $V_j$ to the effective vector
$\ee_j$; hence $\ww$ is quasi-effective in $V_j$.
(It's also quasi-effective in $V_2$, being numerically equivalent to $\ee_1$.)
\end{example}
\begin{proposition}
If a vector $\ww$ is numerically equivalent to an effective vector, then it has a unique Zariski decomposition, i.e., there is unique way to write it as a sum of a nef vector $\pp$ and an effective vector $\nn$ satisfying conditions (1) through (4) of Theorem \ref{zdecomp}.
\end{proposition}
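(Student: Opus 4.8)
The plan is to reduce both halves of the statement to Theorem~\ref{zdecomp} by working with a fixed effective vector in the numerical equivalence class of $\ww$. By hypothesis we may choose an effective vector $\vv \in V$ numerically equivalent in $V$ to $\ww$; let $\vv = \pp_0 + \nn_0$ be its Zariski decomposition, which exists by Theorem~\ref{zdecomp}. Two elementary observations will do all the work. First, whether a vector is nef with respect to $V$ is determined by its products with the basis elements, hence depends only on its numerical equivalence class; in particular a vector numerically equivalent to a nef vector is itself nef. Second, for the same reason the vanishing condition in part~(3) of Theorem~\ref{zdecomp} is unchanged if a vector is replaced by a numerically equivalent one. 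Finally, since $\vv$ and $\ww$ are numerically equivalent, subtracting the same vector from both yields numerically equivalent vectors.

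For existence I would simply set $\pp := \ww - \nn_0$ and $\nn := \nn_0$. Then $\pp$ is numerically equivalent to $\vv - \nn_0 = \pp_0$, which is nef, so $\pp$ is nef; $\nn = \nn_0$ is effective; for each basis element $\ee$ in the support of $\nn_0$ we have $\pp \cdot \ee = \pp_0 \cdot \ee = 0$; and the intersection product is negative definite on the support space of $\nn_0$ by Theorem~\ref{zdecomp} applied to $\vv$. Hence $\ww = \pp + \nn$ satisfies conditions (1)--(4). (As the example preceding the proposition shows, $\pp$ need not be effective, but the statement does not ask for that.)

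For uniqueness, suppose $\ww = \pp + \nn$ is any way of writing $\ww$ as a sum of a nef vector $\pp$ and an effective vector $\nn$ satisfying conditions (1)--(4). I would then check that $\vv = (\vv - \nn) + \nn$ is a Zariski decomposition of the \emph{effective} vector $\vv$: the vector $\vv - \nn$ is numerically equivalent to $\ww - \nn = \pp$ and hence nef; $\nn$ is effective; $(\vv - \nn)\cdot\ee = \pp\cdot\ee = 0$ for each basis element $\ee$ in the support of $\nn$; and the form is negative definite on the support space of $\nn$ by assumption. The uniqueness half of Theorem~\ref{zdecomp} then forces $\nn$ to equal $\nn_0$, the negative part of $\vv$. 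Since $\nn_0$ is the same for every decomposition of $\ww$, the negative part $\nn$ is uniquely determined, and then $\pp = \ww - \nn_0$ is as well.

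The only point requiring care is the appeal just made: that the uniqueness conclusion of Theorem~\ref{zdecomp} applies to $\vv = (\vv - \nn) + \nn$ even though we have no information about $\vv - \nn$ beyond its being nef. This is legitimate because conditions (1)--(4) never mention effectivity of the positive part, and the proof of uniqueness in Theorem~\ref{zdecomp}—the $\max(\pp,\pp')$ argument—uses effectivity only of the vector being decomposed (through the inequalities $x_i \le c_i$) and of the two negative parts. Indeed, as an alternative one could rerun that $\max$ argument verbatim with $\ww$ in place of $\vv$, obtaining uniqueness directly. With that granted, the proposition follows; all remaining verifications are routine transfers of properties across the numerical equivalence of $\vv$ and $\ww$.
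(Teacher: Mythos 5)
Your proof is correct and follows essentially the same route as the paper: transfer the Zariski decomposition of an effective representative $\vv$ across the numerical equivalence (your $\pp=\ww-\nn_0$ is the paper's $\qq+\ttt$), and for uniqueness convert a decomposition of $\ww$ back into one of $\vv$ and invoke the uniqueness in Theorem~\ref{zdecomp}. Your explicit remark that the uniqueness argument of Theorem~\ref{zdecomp} never needs effectivity of the positive part is a point the paper leaves implicit, but it is the same argument in substance.
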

Note, however, that the positive part does not have to be effective. In particular if $\ww$ is nef but not effective, then its positive part is itself.
\begin{proof}
Suppose that $\ww=\vv+\ttt$, where $\vv$ is effective and $\ttt$ is numerically trivial, and let 
$\vv=\qq+\nn$ be the Zariski decomposition of $\vv$. Putting $\pp=\qq+\ttt$, we see that $\pp$ and $\nn$ satisfy the four conditions. Conversely, if $\ww=\pp+\nn$ is a Zariski decomposition then $\vv=(\pp-\ttt)+\nn$ must be the unique decomposition of $\vv$. Thus the Zariski decomposition of $\ww$ is unique.
\end{proof}
For a detailed treatment of Zariski decomposition for quasi-effective vectors (in the original context, where these vectors represent curves on surfaces), see~\cite{fujita}.



\section{The original context (continued).} \label{windup}
We now resume our informal account of the original context in which Zariski developed his theory of decomposition. Figure \ref{twocubics} shows two plane curves of degree three. The polynomial
\begin{equation} \label{reducible}
f(x,y)=(y-x^2)(3y-x-3)
\end{equation} 
defining the curve on the right can be factored, with the visible result that the curve is the union of a line and a conic (a curve of degree 2); we say that these are the \emph{components} of the curve. The other curve has a single component: we call it \emph{irreducible}.
\par
Suppose that
\begin{equation*}
f(x,y)=(f_1(x,y))^{n_1}(f_2(x,y))^{n_2}\cdots(f_k(x,y))^{n_k},
\end{equation*} 
where each $f_i$ is an irreducible polynomial and thus defines an irreducible curve $C_i$, one of the components of $f$. We associate to $f$ the formal linear combination
$$
D=\sum_{i=1}^k n_i C_i,
$$
calling it the \emph{divisor} of $f$. Note that all coefficients are nonnegative; thus this is an \emph{effective divisor}. For example, the divisor associated to the polynomial in (\ref{reducible}) is $C_1+C_2$, where $C_1$ is the conic and $C_2$ is the line.
A similar recipe works for any other surface. For an effective divisor in the plane we define its \emph{degree} to be the degree of the defining polynomial; thus the degree of $\sum n_i C_i$ is $\sum n_i \deg{C_i}$.
\par
Given two distinct irreducible curves $C$ and $D$ on an algebraic surface, they have an \emph{intersection number} $C \cdot D$. Intuitively, this is the number of points in which the curves intersect, and indeed in many cases that is its precise meaning, but to define this number carefully one needs to consider exotic possibilities, so that for example a tangency between the curves gets counted as ``two points" (or even more). Thus to an algebraic curve we can associate a matrix recording the intersection numbers of its components. In the plane\footnote{As in Section \ref{origcontext}, we mean the complex projective plane.}
the intersection number between curves of degrees $c$ and $d$ is $cd$, a fundamental result of \'Etienne B\'ezout dating to 1776.
Hence for the curves in Figure \ref{twocubics} these matrices are
$$
\left[
\begin{array}{c}
9 \\
\end{array}
\right]
\qquad
\text{and}
\qquad
\left[
\begin{array}{cc}
4 & 2 \\
2 & 1 \\
\end{array}
\right].
$$
\par
\begin{figure}
\begin{center}
      \scalebox{0.40}
      {\includegraphics{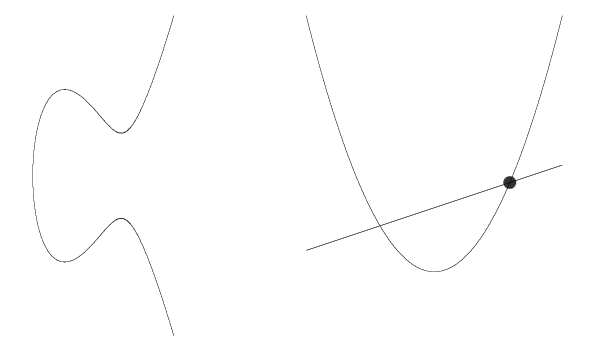}}     
\caption{Two plane curves of degree three. The curve on the left is irreducible, while the curve on the right has two components. In the next figure, we show what happens if this curve is blown up at the indicated point.}
\label{twocubics}
\end{center}
    \end{figure}
\par
For distinct irreducible curves, the intersection number is always a nonnegative integer. Thus the off-diagonal entries in these matrices are nonnegative, and they are intersection matrices as defined in Section  \ref{decomp}. The diagonal entries are \emph{self-intersection numbers}. In our example we have calculated them using B\'ezout's formula, but on other algebraic surfaces one has the startling fact: a self-intersection number may be negative! The simplest example of this comes from a process called \emph{blowing up}, in which a given surface is modified by replacing one of its points $p$ by an entire curve $E$ having self-intersection number $-1$, called an \emph{exceptional curve}. (This process is the basic operation of the ``birational geometry'' to which Mumford alludes in the quotation in Section \ref{origcontext}.) Each irreducible curve $C$ on the original surface which contains $p$ can be ``lifted'' to a curve on the new surface meeting $E$. We will abuse notation by referring to the lifted curve with the same letter $C$, but a remarkable thing happens to $C \cdot C$: it is reduced in value (typically by 1). For example, if one blows up the plane at one of the two intersection points shown in Figure \ref{twocubics}, then the intersection matrix for the two original components and the new curve $E$ is as follows:
$$
\left[
\begin{array}{rrr}
3 & 1 & 1 \\
1 & 0 & 1 \\
1 & 1 & -1 \\
\end{array}
\right].
$$
See Figure \ref{blownup}, noting that the two original components have been pulled apart, so that they now meet at just a single point. 
\par

\begin{figure}
\begin{center}
      \scalebox{0.40}
      {\includegraphics{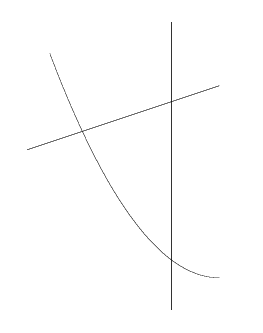}}     
\caption{The result of blowing up the rightmost curve in Figure \ref{twocubics} at the indicated point. The exceptional curve is represented by a vertical line.}
\label{blownup}
\end{center}
    \end{figure}
\par
The definition of intersection number between a pair of irreducible curves extends by linearity to any pair of divisors (effective or not). If one has the intersection matrix, then the calculation is simply a matrix multiplication. For example, the self-intersection of the divisor $C_1+C_2$
associated to the polynomial in (\ref{reducible}) is
$$
\left[
\begin{array}{cc}
1 &1 \\
\end{array}
\right]
\left[
\begin{array}{cc}
4 & 2 \\
2 & 1 \\
\end{array}
\right]
=
\left[
\begin{array}{c}
1 \\
1 \\
\end{array}
\right]
=9.
$$
Note that the result is the square of its degree.
\par
As we have observed, the matrix of intersection numbers for a set of irreducible curves on an algebraic surface is an intersection matrix. Thus for any effective divisor $D$ we can compute a Zariski decomposition, obtaining a positive and negative part.
What Zariski discovered in his fundamental paper \cite{zar} is that the solution of the Riemann-Roch problem for $D$ was strongly controlled by its positive part. More precisely, letting $P$ denote the positive part, he showed that
\begin{equation}\label{zf}
\lim_{n \to \infty}\frac{\dim |nD|}{n^2/2}=P \cdot P.
\end{equation}
To illustrate this formula, we present two examples.
\begin{example}
Let $D$ be an effective divisor of degree $d$ in the plane. The linear system $|nD|$ consists of all effective divisors of degree $nd$, and thus has dimension $\binom{nd+2}{2}-1$. By B\'ezout's theorem, the intersection of $D$ with any irreducible curve is positive; hence $D$ is nef, and thus its positive part is $D$ itself. Zariski's formula (\ref{zf}) says that 
$$
\lim_{n \to \infty}\frac{\binom{nd+2}{2}-1}{n^2/2}=D \cdot D=d^2.
$$
\end{example}
\begin{example}
(This example is also treated in Example~3.5 of \cite{chambers2}.)
Blow up the plane at two points $P_1$ and $P_2$, calling the exceptional curves $E_1$ and $E_2$, and let $L$ denote the lift of the line through the two points. Then the intersection matrix with respect to the ordered basis 
$\{L,E_1,E_2\}$
 is
$$
\left[
\begin{array}{rrr}
-1 & 1 & 1 \\
1 & -1 & 0 \\
1 & 0 & -1 \\
\end{array}
\right].
$$
Consider $D=aL+bE_1+cE_2$, where all coefficients are nonnegative. Then there are five possibilities for the Zariski decomposition:
\begin{equation}\label{5cases}
\begin{cases}
(aL+bE_1+cE_2)+0 & \text{if} \quad a\geq b, \hspace{1mm} a\geq c,  \hspace{1mm} b+c \geq a\\
(aL+aE_1+aE_2)+((b-a)E_1+(c-a)E_2) & \text{if} \quad a\leq b, \hspace{1mm} a\leq c \\
(aL+aE_1+cE_2)+(b-a)E_1 & \text{if} \quad c\leq a \leq b \\
(aL+bE_1+aE_2)+(c-a)E_2 & \text{if} \quad b\leq a \leq c\\
((b+c)L+bE_1+cE_2)+(a-(b+c))L & \text{if} \quad b+c\leq a
\end{cases}
\end{equation}
(where we have always written the positive part first).
\par
We can give a description of the linear system $|D|$ in terms of plane curves, as follows: it consists of those curves $f(x,y)=0$ for which the polynomial $f$ has degree $a$ and satisfies these conditions:
\begin{enumerate}
\item the partial derivatives of $f$ of order less than $a-b$ vanish at $P_1$;
\item similarly, the partial derivatives of $f$ of order less than $a-c$ vanish at $P_2$.
\end{enumerate}
Let us check this description against Zariski's formula  (\ref{zf}) in the first and last of the five cases of (\ref{5cases}) (the other cases being similar). In the first case, we are imposing $\binom{a-b+1}{2}$ conditions at the point $P_1$, and $\binom{a-c+1}{2}$ conditions at $P_2$. One can confirm that the two sets of conditions are independent, and thus the dimension of the linear system is 
$$
\dim |D|=\binom{a+2}{2}-\binom{a-b+1}{2}-\binom{a-c+1}{2}-1.
$$
Similarly one has
$$
\dim |nD|=\binom{na+2}{2}-\binom{n(a-b)+1}{2}-\binom{n(a-c)+1}{2}-1,
$$
so that 
$$
\lim_{n \to \infty}\frac{\dim |nD|}{n^2/2}=a^2-(a-b)^2-(a-c)^2=-a^2-b^2-c^2+2ab+2ac=P \cdot P.
$$
\par
In the final case of (\ref{5cases}), the conditions imposed at the two points are no longer independent. However one can show the following: each polynomial $f$ is divisible by $l^{a-(b+c)}$, where $l=0$ is an equation of the line through $P_1$ and $P_2$; furthermore, the quotient $f/l^{a-(b+c)}$ has degree $b+c$, with its partial derivatives of order less than $b$ vanishing at  $P_1$, and similarly its partial derivatives of order less than $c$ vanishing at  $P_2$; these conditions are independent, and thus the dimension of the linear system is
$$
\dim |D|=
\binom{b+c+2}{2}-\binom{b+1}{2}-\binom{c+1}{2}-1
=(b+1)(c+1)-1.
$$
Similarly
$$
\dim |nD|=(nb+1)(nc+1)-1,
$$
so that 
$$
\lim_{n \to \infty}\frac{\dim |nD|}{n^2/2}=2bc=-(b+c)^2-b^2-c^2+2(b+c)b+2(b+c)c=P \cdot P.
$$
\par
\end{example}
Zariski's ideas about decomposition of curves on an algebraic surface continue to resonate in contemporary developments. Miles Reid \cite{reid}, for example, has written that ``Zariski's paper on the asymptotic form of Riemann-Roch for a divisor on a surface forms a crucial bridge between the Italian tradition of surfaces and modern work on 3-folds [algebraic varieties of dimension 3].''
It led Reid, Mori, Koll\'ar, and other researchers to the crucial ideas of ``extremal rays'' and ``canonical and minimal models'' in higher dimensions. Reid emphasizes that ``the Zariski decomposition of a divisor on a surface is \dots a kind of minimal model program.'' For an introduction to these modern aspects of higher-dimensional algebraic geometry, see \cite{matsuki}.

\paragraph{Acknowledgments.}
We learned a great deal about this topic through conversations with Herb Clemens and from the text of Robert Lazarsfeld \cite{laz}. We also thank Lazarsfeld for advice on how to rearrange this paper.


 
 \bigskip

\noindent\textbf{Thomas Bauer}  is Professor of Mathematics at Philipps Universit\"at
Marburg. He received his Ph.D. and his habilitation from the
University of Erlangen-N\"urnberg. His primary research lies in
Algebraic Geometry. Moreover, he has a strong interest in the
education of math teachers.

\noindent\textit{Fachbereich Mathematik und Informatik, Philipps-Universit\"at Marburg, Hans-Meerwein-Stra{\ss}e, Lahnberge, D-35032 Marburg, Germany\\
tbauer@mathematik.uni-marburg.de}

\bigskip

\noindent\textbf{Mirel Caib\u{a}r}  received his Ph.D. from the  University of Warwick in $1999$. His research area is Algebraic Geometry. He is currently an Assistant Professor at the Mansfield campus of The Ohio State University. 

\noindent\textit{Ohio State University at Mansfield, 1760 University Drive,
Mansfield, Ohio 44906, USA\\
caibar@math.ohio-state.edu}

\bigskip

\noindent\textbf{Gary Kennedy}  is Professor of Mathematics at the Mansfield campus of The Ohio State University. He received his Ph.D. from Columbia University in 1981. Together with his son Stephen, he has twice constructed a daily crossword puzzle for the New York Times.

\noindent\textit{Ohio State University at Mansfield, 1760 University Drive,
Mansfield, Ohio 44906, USA\\
kennedy@math.ohio-state.edu}

\end{document}